\newtheorem{thm}{Theorem}
\theoremstyle{plain}
\newtheorem{lem}[thm]{Lemma}
\newtheorem{prop}[thm]{Proposition}
\theoremstyle{remark}
\newtheorem*{acknowledgements}{Acknowledgements}
\newcommand{\BC}{{\mathbb{C}}}
\newcommand{\BH}{{\mathbb{H}}}
\newcommand{\BN}{{\mathbb{N}}}
\newcommand{\BQ}{{\mathbb{Q}}}
\newcommand{\BR}{{\mathbb{R}}}
\newcommand{\CF}{{\mathcal F}}
\renewcommand{\mod}{\mathop{\rm mod}\nolimits}
\newcommand{\im}{\mathop{{\rm Im}}\nolimits}
\newcommand{\GL}[1]{\mathop{\rm GL}_{#1} \nolimits}
\newcommand{\SL}[1]{\mathop{\rm SL}_{#1} \nolimits}
\newcommand{\SO}[1]{\mathop{\rm SO}_{#1} \nolimits}
\newcommand{\Mat}[2]{\mathop{\rm Mat}_{#1 \times #2} \nolimits}
\newcommand{\tr}{\mathop{\rm tr}\nolimits}
\newcommand{\artanh}{\mathop{\rm artanh}\nolimits}
\renewcommand{\Re}{\mathop{\rm Re}\nolimits}
\renewcommand{\Im}{\mathop{\rm Im}\nolimits}
\newcommand{\quotient}[2]{
        \mathchoice
            {
                \text{\raise1ex\hbox{$#1$}\Big/\lower1ex\hbox{$#2$}}%
            }
            {
                #1\,/\,#2
            }
            {
                #1\,/\,#2
            }
            {
                #1\,/\,#2
            }
    }
\newcommand{\lquotient}[2]{
        \mathchoice
            {
                \text{\lower1ex\hbox{$#1$}\Big \backslash \raise01ex\hbox{$#2$}}%
            }
            {
                #1\,\backslash\,#2
            }
            {
                #1\,\backslash\,#2
            }
            {
                #1\,\backslash\,#2
            }
    }
\newcommand{\rquotient}[2]{
        \mathchoice
            {
                \text{\raise01ex\hbox{$#1$}\Big/\lower1ex\hbox{$#2$}}%
            }
						{
                #1\,/\,#2
            }
            {
                #1\,/\,#2
            }
            {
                #1\,/\,#2
            }
    }
\newcommand{\lrquotient}[3]{
        \mathchoice
            {
                \text{\lower1ex\hbox{$#1$}\Big \backslash \raise01ex\hbox{$#2$}\Big/\lower1ex\hbox{$#3$}}%
            }
            {
                #1\,\backslash\,#2\,/\,#3
            }
            {
                #1\,\backslash\,#2\,/\,#3
            }
            {
                #1\,\backslash\,#2\,/\,#3
            }
    }
\newcommand{\sm}{\left(\begin{smallmatrix}}
\newcommand{\esm}{\end{smallmatrix}\right)}
\newcommand{\bpm}{\begin{pmatrix}}
\newcommand{\ebpm}{\end{pmatrix}}
\newcommand{\one}{{\rm 1\mskip-4mu l}}
\numberwithin{equation}{section}
\begin{document}

\bibliographystyle{plain}

\date{\today}
\subjclass[2020]{11F06 (20H05)}

\title[Small diameters and generators]{Small diameters and generators for arithmetic lattices in $\mathrm{SL}_2(\mathbb{R})$ and certain Ramanujan graphs}

\author{Raphael S. Steiner}
\address{Department of Mathematics, ETH Z{\"u}rich, 8092 Z\"urich, CH}%
\email{raphael.steiner.academic@gmail.com}%





\begin{abstract} We show that arithmetic lattices in $\mathrm{SL}_{2}(\mathbb{R})$, stemming from the proper units of an Eichler order in an indefinite quaternion algebra over $\mathbb{Q}$, admit a `small' covering set. In particular, we give bounds on the diameter if the quotient space is co-compact. Consequently, we show that these lattices admit small generators. Our techniques also apply to definite quaternion algebras where we show Ramanujan-strength bounds on the diameter of certain Ramanujan graphs without the use of the Ramanujan bound.
\end{abstract}
\maketitle


\section{Introduction}

Let $B$ be an indefinite quaternion algebra over $\mathbb{Q}$ of reduced discriminant\footnote{Product of the finite primes at which $B$ ramifies.} $D$ and $R \subset B$ be an Eichler order of level $Q$. Let $\Gamma$ be the subset of proper units (elements of norm one) of $R$. Further, fix an isomorphism $B(\mathbb{R})$ with the matrix algebra $\Mat{2}{2}(\mathbb{R})$\footnote{Any other isomorphism is a conjugate by a matrix in $\GL{2}(\mathbb{R})$.}. In the case where $B$ is already split over $\mathbb{Q}$ ($D=1$), we may choose this identification such that
$$
\Gamma = \Gamma_0(Q) = \{ \sm a & b \\ c & d \esm \in \SL{2}(\mathbb{Z}) \, | \, c \equiv 0 \mod(Q)\}
$$
is the familiar congruence lattice. Under the identification of $B(\mathbb{R})$, $\Gamma$ is a lattice in $\SL{2}(\mathbb{R})$ of co-volume $V_{\Gamma} = (DQ)^{1+o(1)}$. Furthermore, $\Gamma$ is co-compact if and only if $B(\mathbb{Q})$ is a division algebra ($D\neq 1$) \cite{Eichlermodcorr}.

A typical example of a fundamental domain for the action of $\Gamma$ on the homogeneous space $\rquotient{\SL{2}(\mathbb{R})}{\SO{2}(\mathbb{R})} \cong \mathbb{H}$, the upper half-plane, is a normal polygon, also known as a Dirichlet domain, which is given as follows. Let $w \in \mathbb{H}$ be a point whose stabiliser $\Gamma_w$ in $\Gamma$ consists only of plus and minus the identity element. Then, the normal polygon with centre $w$ is given by
\begin{equation}
\mathcal{F}_{\Gamma,w} = \{ z \in \mathbb{H} \, | \, d(z,w) < d(\gamma z, w), \ \forall \gamma \in \Gamma, \gamma \neq \pm I \},
\label{eq:normalpolygon}
\end{equation}
where $d$ denotes the hyperbolic distance. If $B$ is split and $\Gamma = \Gamma_0(Q)$, then another typical example of a fundamental domain is given by the standard polygon
\begin{equation}
\mathcal{F}_{\Gamma,\infty} = \{ z \in \mathbb{H} \, | \, |\Re(z)| < \tfrac{1}{2}, \, \Im(z) > \Im(\gamma z), \ \forall \gamma \in \Gamma-\Gamma_{\infty} \},
\label{eq:standardpolygon}
\end{equation}
also referred to as a Ford domain. The sides of these polygons may be paired up such that the corresponding side-pairing motions together with $-I$ generate the group $\Gamma$ \cite[Chapter 2]{IwSpecMeth}. Thus, it comes as no surprise that the 'size' of these fundamental domains, e.g.\@ the diameter of $\lquotient{\Gamma}{\mathbb{H}}$ if the latter is compact, are related to the size of generators of $\Gamma$. Algorithms to compute fundamental domains and subsequently a set of generators have been devised by Johansson \cite{Johanssonfund}, Voight \cite{Voightfundamental}, and subsequently improved by Rickards \cite{Rickardsfundcomp} if $\Gamma$ is co-compact, and Kurth--Long \cite{KurthLong} if $\Gamma$ is a finite index subgroup of $\SL{2}(\mathbb{Z})$ through the use of Farey symbols \cite{FareySymbol}. In the latter case, further algorithms based on the Reidemeister--Schreier process \cite{Reidemeister,Schreier} are available to determine an independent set of generators for $\Gamma(p)$, where $p$ is a prime, by Frasch \cite{Frasch33}, for $\Gamma_0(p)$ by Rademacher \cite{Rademacher29}, and for $\Gamma_0(Q)$, for general $Q\in \mathbb{N}$, by Chuman \cite{Chuman73}. Albeit the former algorithms due Johansson, Voight, Rickards, and Kurth--Long work well in practise, they don't give any answer to the question regarding the asymptotic size of the (produced) generators and thus an upper bound on their time complexity. The algorithms based on the Reidemeister--Schreier process do give an explicit set of generators whose elements are of polynomial size in the co-volume, but they are far from the generating set whose elements are of least size. Stronger results in that direction were given by Khoai \cite{Khoai}, who managed to show that $\Gamma_0(p^r)$ is generated by elements of Frobenius norm bounded by $O(p^{2r})$, and Chu--Li \cite{ChuLi} who managed to show that $\Gamma$ co-compact is generated by its elements of Frobenius norm bounded by $O_{\epsilon}(V_{\Gamma}^{2.56+\epsilon})$\footnote{They state their theorem with exponent $7.68$, though their method gives $5.12+o(1)$. In turn, this can be halved again by replacing their final argument by the argument in this paper.}. In this paper, we shall prove the following theorems.

\begin{thm} $\Gamma_0(Q)$ is generated by its elements of Frobenius norm $O_{\epsilon}(Q^{1+\epsilon})$.
\label{thm:gennoncompact}
\end{thm}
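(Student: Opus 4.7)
The plan is to apply the Reidemeister--Schreier rewriting process to the inclusion $\Gamma_0(Q)\subset\SL{2}(\BZ)$. Let $N=[\SL{2}(\BZ):\Gamma_0(Q)]=Q\prod_{p\mid Q}(1+p^{-1})$, fix right coset representatives $g_1,\ldots,g_N$, and take the standard generators $S=\sm 0 & -1\\ 1 & 0\esm,T=\sm 1 & 1\\0 & 1\esm$ of $\SL{2}(\BZ)$. By the Schreier lemma, $\Gamma_0(Q)$ is generated by the elements $g_iX\,\overline{g_iX}^{-1}$ as $i$ ranges over $\{1,\dots,N\}$ and $X$ over $\{S,T\}$, where $\overline{g}$ denotes the chosen representative of $\Gamma_0(Q)g$. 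Since $\|S\|_F,\|T\|_F=O(1)$, the Frobenius norm is submultiplicative on $\Mat{2}{2}(\BR)$, and $\|g^{-1}\|_F=\|g\|_F$ on $\SL{2}(\BR)$, each such generator has Frobenius norm $\ll(\max_j\|g_j\|_F)^2$. The theorem therefore reduces to producing coset representatives with $\|g_j\|_F\ll_\epsilon Q^{1/2+\epsilon}$.

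The right cosets $\Gamma_0(Q)\backslash\SL{2}(\BZ)$ are parametrised by $\BP^1(\BZ/Q\BZ)$ via the bottom-row map. For a class represented by $(c_0,d_0)\in\BZ^2$ with $\gcd(c_0,d_0,Q)=1$, any primitive pair $(c,d)\in\BZ^2$ in the same class yields (via B\'ezout) a matrix $\sm a & b\\ c & d\esm\in\SL{2}(\BZ)$ with $|a|\le|c|/2$ and $|b|\le|d|/2+1$, hence Frobenius norm $\ll\max(|c|,|d|)$. Thus it suffices to find a primitive vector of norm $\ll_\epsilon Q^{1/2+\epsilon}$ in the lattice
\[
\Lambda_{c_0,d_0}=\{(c,d)\in\BZ^2\,:\,cd_0\equiv dc_0\pmod Q\},
\]
which has covolume $Q$ because $\gcd(c_0,d_0,Q)=1$. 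Minkowski's theorem already produces a nonzero vector of norm $\ll\sqrt Q$.

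The main obstacle will be the primitivity. If a Minkowski-short vector has $\gcd$ $e$ coprime to $Q$, then $e^{-1}(c,d)\in\Lambda_{c_0,d_0}$ is strictly shorter, contradicting minimality; so only common factors $e$ sharing a prime with $Q$ can occur. To sieve these out, one observes that for each prime $p\mid Q$ the sublattice $\Lambda_{c_0,d_0}\cap p\BZ^2$ has index $p$ in $\Lambda_{c_0,d_0}$ (from the nonzero linear form $cd_0-dc_0\bmod p$), so heuristically the density of primitive vectors in $\Lambda_{c_0,d_0}$ is
\[
\frac{1}{\zeta(2)}\prod_{p\mid Q}\frac{p}{p+1}\;\gg\;\frac{1}{\log\log Q},
\]
and a primitive vector of norm $\ll\sqrt{Q\log\log Q}\ll_\epsilon Q^{1/2+\epsilon}$ exists. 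The delicate point is making this rigorous uniformly in $Q$: the error in counting $\Lambda_{c_0,d_0}\cap B_R$ depends on the first successive minimum, which can be much smaller than $\sqrt Q$, so a naive inclusion--exclusion may have error swamping the main term. I would therefore run a Selberg-type small sieve over the primes dividing $Q$, or argue directly with the Minkowski-reduced basis $\{e_1,e_2\}$ of $\Lambda_{c_0,d_0}$ by inspecting $\BZ$-combinations $ae_1+be_2$ with small $\gcd(a,b)$ and with coordinates that avoid the at most $O(\log Q)$ bad primes. Feeding the resulting coset representatives into the Reidemeister--Schreier generating set then yields the claimed $O_\epsilon(Q^{1+\epsilon})$ bound.
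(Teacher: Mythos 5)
Your reduction breaks at the step ``it suffices to produce coset representatives with $\|g_j\|_F\ll_\epsilon Q^{1/2+\epsilon}$'': such representatives do not exist in general, so the target of your reduction is false. Concretely, let $p$ be the smallest prime factor of $Q$, write $M=Q/p$, and consider the coset of $\mathbb{P}^1(\BZ/Q\BZ)$ represented by $(c_0,d_0)=(1,M)$. Any $\sm a & b \\ c & d\esm\in\SL{2}(\BZ)$ in this coset has $(c,d)\equiv\lambda(1,M)\pmod Q$ with $\lambda\in(\BZ/Q\BZ)^\times$; then $M\mid d$, while $d\equiv\lambda M\not\equiv 0\pmod Q$ forces $d\neq 0$, so $|d|\ge M=Q/p$. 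For even $Q$ this is $\ge Q/2$, so every representative of this coset has Frobenius norm $\gg Q$, and the Schreier bound $\ll(\max_j\|g_j\|_F)^2$ then only gives $O(Q^{2})$ --- which is essentially the classical Reidemeister--Schreier-type bound (cf.\ Khoai's $O(p^{2r})$ for $\Gamma_0(p^r)$ cited in the introduction), not $O_\epsilon(Q^{1+\epsilon})$. Your own lattice formulation shows why the sieve you propose cannot rescue this: for $(c_0,d_0)=(1,M)$ one has $\Lambda_{1,M}=\{(c,M(c-pk))\}$, whose vectors of norm $<M$ are exactly the multiples of the imprimitive vector $(p,0)$; all short vectors lie on a single line of vectors sharing the factor $p$ with $Q$, so there is no positive density of primitive points at scale $Q^{1/2+\epsilon}$ to be extracted --- the failure is structural, not a matter of error terms in inclusion--exclusion. (The heuristic density $\prod_{p\mid Q}(1-1/p)$ is only meaningful once $R$ exceeds the second successive minimum, which here is $\asymp Q/p$.) Even in the generic cosets where short primitive vectors do exist, the sieve step is left as a sketch, but the decisive obstruction is the exceptional cosets above.

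The paper takes a different and, in this respect, essential route: it works with the Ford/standard polygon $\mathcal{F}_{\Gamma,\infty}$ and shows, via Dirichlet approximation combined with a Jacobsthal-type lemma (Lemma \ref{lem:Jacobsthal}), that every isometric circle $|cQz+d|=1$ contributing to the boundary has radius $\ge Q^{-1+o(1)}$; Ford's theorem then says the corresponding side-pairing motions generate $\Gamma_0(Q)$, and these have entries $\ll_\epsilon Q^{1+\epsilon}$. The key point is that this generating set is not tied to a transversal of $\Gamma_0(Q)\backslash\SL{2}(\BZ)$, so it is unaffected by the existence of cosets whose shortest representatives have norm $\asymp Q$. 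If you want to salvage a Schreier-style argument, you would need to exploit cancellation in $g_iX\,\overline{g_iX}^{-1}$ for the large cosets rather than the submultiplicative bound, and it is not clear how to do that uniformly; as it stands, the proposal has a genuine gap.
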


\begin{thm} Let $\Gamma \subset \SL{2}(\mathbb{R})$ be a co-compact arithmetic lattice of co-volume $V_{\Gamma}$ stemming from the proper units of an Eichler order $R$ of level $Q$ in a quaternion algebra $B$ over $\mathbb{Q}$ of reduced discriminant $D$. Then, for almost every $\sigma \in \lquotient{\Gamma}{\SL{2}(\mathbb{R})}$, $\sigma^{-1}\Gamma\sigma$ is generated by its elements of Frobenius norm $O_{\epsilon}(V_{\Gamma}^{2+\epsilon})$. In other words, almost every embedding $\Gamma$ of the proper units of the order $R$ into $\SL{2}(\mathbb{R})$ is generated by its elements of Frobenius norm $O_{\epsilon}(V_{\Gamma}^{2+\epsilon})$.

If one assumes either of the following conditions:
\begin{itemize}
	\item $Q$ is square-free,
	\item Selberg's eigenvalue conjecture for $\Gamma_0(DQ)$,
	\item the sup-norm conjecture in the level aspect for exceptional eigenforms on \\ $\lrquotient{\Gamma}{\SL{2}(\mathbb{R})}{\SO{2}(\mathbb{R})}$,
\end{itemize}
then, $\Gamma$ is in fact generated by its elements of Frobenius norm $O_{\epsilon}(V_{\Gamma}^{2+\epsilon})$ regardless of the embedding.
\label{thm:gencompact}
\end{thm}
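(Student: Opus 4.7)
The plan is to reduce Theorem \ref{thm:gencompact} to a geometric statement about the circumradius of a Dirichlet fundamental polygon, and then to prove that geometric bound via spectral theory on $\lquotient{\Gamma}{\BH}$.

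First, for a base point $w \in \BH$ with $\Gamma_w = \{\pm I\}$ and $w = \sigma \cdot i$ for some $\sigma \in \SL{2}(\BR)$, the Poincar\'e polygon theorem produces a generating set for $\Gamma$ (together with $-I$) consisting of the side-pairing elements of the Dirichlet domain $\CF_{\Gamma,w}$ from \eqref{eq:normalpolygon}. Each such generator $\gamma$ satisfies $d(w,\gamma w) \leq 2R(w)$, where
\[
R(w) \;=\; \sup_{z \in \BH}\,\inf_{\gamma \in \Gamma} d(z,\gamma w)
\]
is the covering radius of the orbit $\Gamma w$. Via the identity $\|\sigma^{-1}\gamma\sigma\|_F^2 = 2\cosh d(w,\gamma w)$, the conjugated lattice $\sigma^{-1}\Gamma\sigma$ is then generated by elements of Frobenius norm $\ll e^{R(w)}$, and the theorem reduces to the covering bound $R(w) \leq (2+\epsilon)\log V_\Gamma + O_\epsilon(1)$.

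Second, I would attack $R(w)$ through the Selberg pre-trace formula applied to a positive bump kernel $k_R$ concentrated in hyperbolic balls of radius $R$: evaluating at $(z,w)$ gives
\[
\sum_{\gamma \in \Gamma} k_R(z,\gamma w) \;=\; \frac{h(t_0)}{V_\Gamma} + \sum_{j \geq 1} h(t_j)\, u_j(z)\, \overline{u_j(w)},
\]
where $\{u_j\}$ is an orthonormal basis of Laplace eigenforms with $\lambda_j = \tfrac{1}{4} + t_j^2$ and $h$ is the Selberg transform of $k_R$. The main term is of order $e^R/V_\Gamma$; if the error term is strictly smaller at every $z \in \BH$, then positivity of $k_R$ forces some $\gamma \in \Gamma$ with $d(z,\gamma w) \leq R$, hence $R(w) \leq R$.

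The contribution of the principal spectrum ($t_j \in \BR$) is $O(e^{R/2})$ by standard decay estimates on $h$ combined with the local Weyl law, and this is dominated by the main term precisely when $R \geq (2-o(1))\log V_\Gamma$ --- the very origin of the exponent $2$ in the theorem. The main obstacle will be the exceptional spectrum ($t_j \in i\BR$, i.e.\ $0 < \lambda_j < \tfrac{1}{4}$), for which $h(t_j) \asymp e^{R(\frac{1}{2}+|t_j|)}$ grows exponentially. For almost every $w$, integrating $|u_j(w)|^2$ over $\lquotient{\Gamma}{\BH}$ (with total mass one after normalisation) and invoking Chebyshev's inequality keep the exceptional contribution below the main term off a set of $w$ of small measure, yielding the unconditional ``almost every $\sigma$'' assertion. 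Under Selberg's conjecture for $\Gamma_0(DQ)$, the Jacquet--Langlands correspondence rules out exceptional eigenvalues on $\Gamma$, so the error vanishes. Under the level-aspect sup-norm conjecture, $|u_j(w)|^2 \leq V_\Gamma^\epsilon$ holds uniformly, which absorbs the exponential growth of $h(t_j)$ at the choice $R = (2+\epsilon)\log V_\Gamma$. For square-free $Q$, Atkin--Lehner newform theory restricts the exceptional Maass forms to a quantitatively small subset whose concentration can be controlled, again recovering the bound. In every case the crucial point is to neutralise the potential concentration of exceptional Maass forms at the point $w$.
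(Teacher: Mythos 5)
Your geometric reduction is fine and essentially the paper's: the paper also passes through the Dirichlet/Ford domain with centre $w$ and bounds the side-pairing generators by $\|\gamma\|_F^2\ll e^{2r}$ where $r$ is the covering radius, and your threshold analysis for the tempered spectrum (main term $\asymp e^R/V_\Gamma$ versus $e^{R/2}$, whence the exponent $2$) is also the right mechanism, implemented in the paper with a fixed small ball kernel plus a sphere-averaging operator $A_T$ rather than a single kernel of radius $R$. The genuine gap is in how you treat the exceptional spectrum in the unconditional part. ``Integrating $|u_j(w)|^2$ and invoking Chebyshev'' does \emph{not} give the exponent $2$: with only Selberg's bound $s_j\le\frac34$ and the Weyl-law count of $V_\Gamma^{1+o(1)}$ exceptional forms, the exceptional error at $R=(2+\epsilon)\log V_\Gamma$ is of size roughly $e^{3R/4}V_\Gamma^{o(1)}$, which exceeds the main term $e^R/V_\Gamma$ unless $R\ge(4+o(1))\log V_\Gamma$; even inserting Kim--Sarnak this route only recovers the Chu--Li exponent $2.56$. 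The ingredient you are missing is the density estimate $\sharp\{j>0:\ s_j>\sigma\}\ll_\epsilon (DQ)^{3-4\sigma+\epsilon}$, transferred from $\Gamma_0(DQ)$ to $\Gamma$ by Jacquet--Langlands with bounded fibre multiplicity, and it must be combined with an \emph{asymmetric} Cauchy--Schwarz: place the entire weight $e^{2s_jR}$ on the variable that gets averaged (so the density bound applies after integration), and on the $\sup_z$ side use only the uniform bound $\sum_{\mathrm{exc}}|u_j(z)|^2\ll V_\Gamma^{1+o(1)}$, which itself needs a Margulis-lemma/arithmeticity argument valid uniformly in $z$, including near elliptic points (the paper's Lemma \ref{lem:KernelL2bound}). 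None of this is in your sketch, and without it the claimed bound $R\le(2+\epsilon)\log V_\Gamma$ does not follow.

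The conditional cases have a similar issue. Under the sup-norm hypothesis, the bound $|u_j(w)|^2\le V_\Gamma^{\epsilon}$ alone again yields only exponent $4$ (or $2.56$ with Kim--Sarnak); you still need the density estimate to get $2$. More seriously, for square-free $Q$ your appeal to ``Atkin--Lehner newform theory restricting the exceptional forms to a quantitatively small subset'' is not a mechanism that exists: newform theory neither reduces the number of exceptional forms nor controls their concentration. The actual input in the paper is the fourth-moment bound of Khayutin--Nelson--Steiner, $\sum_{\mathrm{exc}}\bigl(|u_j(z)|^2-|u_j(z_0)|^2\bigr)^2\ll_\epsilon V_\Gamma^{1+\epsilon}$, applied after subtracting a good reference point $z_0$ furnished by the averaged (density) bound, and this is one of the two new ingredients responsible for the theorem. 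Finally, note the paper's conditional statement is routed through an almost-all-$z$ bound at radius $(1+o(1))\log 3V_\Gamma$, giving the diameter bound $(2+o(1))\log 3V_\Gamma$ and hence the every-embedding generation statement; your direct every-$w$ covering bound would also suffice, but only once the density and fourth-moment inputs above are supplied.
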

Theorem \ref{thm:gennoncompact} follows from carefully bounding the standard polygon \eqref{eq:standardpolygon} by isometric circles of large radius. This is layed out in Section \ref{sec:stdpolygon}. Theorem \ref{thm:gencompact} follows from showing, that the normal polygon \eqref{eq:normalpolygon} is contained in a ball of small radius.

\begin{thm} Let $\Gamma \subset \SL{2}(\mathbb{R})$ be a co-compact arithmetic lattice of co-volume $V_{\Gamma}$ stemming from the proper units of an Eichler order $R$ of level $Q$ in a quaternion algebra $B$ over $\mathbb{Q}$ of reduced discriminant $D$. Then, for almost every $w \in \lquotient{\Gamma}{\mathbb{H}}$
\begin{equation}
\sup_{z \in \mathbb{H}} \min_{\gamma \in \Gamma} d(\gamma z, w) \le (2+o(1)) \log 3V_{\Gamma}.
\label{eq:thm3}
\end{equation}
If one assumes either of the following conditions:
\begin{itemize}
	\item $Q$ is square-free,
	\item Selberg's eigenvalue conjecture for $\Gamma_0(DQ)$,
	\item the sup-norm conjecture in the level aspect for exceptional eigenforms on \\ $\lrquotient{\Gamma}{\SL{2}(\mathbb{R})}{\SO{2}(\mathbb{R})}$,
\end{itemize}
then, for every $w \in \mathbb{H}$ almost every $z \in \lquotient{\Gamma}{\mathbb{H}}$ satisfies
\begin{equation}
\min_{\gamma \in \Gamma} d(\gamma z, w) \le (1+o(1)) \log 3V_{\Gamma}.
\label{eq:thm3eq2}
\end{equation}
In particular, the diameter of the hyperbolic surface $\lquotient{\Gamma}{\mathbb{H}}$ is bounded by $(2+o(1)) \log 3 V_{\Gamma}$.
\label{thm:diamcompact}
\end{thm}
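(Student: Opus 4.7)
The plan is to study, for given $z,w \in \mathbb{H}$ and $r>0$, the lattice-point counting function
\[
N_r(z,w) = \sum_{\gamma \in \Gamma} k(d(\gamma z,w)),
\]
where $k\ge 0$ is a test function with $k(0)>0$ and $\supp k \subset [0,r]$. If $N_r(z,w)>0$, then some $\gamma \in \Gamma$ satisfies $d(\gamma z, w) \le r$, so the diameter estimates reduce to showing positivity of $N_r$ for $r$ of the claimed size. By Selberg's pre-trace formula on $\lquotient{\Gamma}{\mathbb{H}}$,
\[
N_r(z,w) \;=\; \frac{1}{V_\Gamma}\int_{\mathbb{H}} k(d(z',w))\,d\mu(z') \;+\; \sum_{j\ge 0} h(t_j)\,\phi_j(z)\overline{\phi_j(w)} \;+\; (\text{Eisenstein}),
\]
where $\{\phi_j\}$ is an $L^2$-orthonormal basis of Hecke--Maass cuspforms on $\lquotient{\Gamma}{\mathbb{H}}$ with Laplace eigenvalue $\tfrac14+t_j^2$ and $h$ is the Harish--Chandra/Selberg transform of $k$. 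The main term has size $\asymp e^r/V_\Gamma$, and the task is to beat the spectral remainder.

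For the unconditional bound \eqref{eq:thm3}, I would, for each fixed $z$, regard $w \mapsto N_r(z,w)$ as an element of $L^2(\lquotient{\Gamma}{\mathbb{H}})$. By Parseval,
\[
\int_{\lquotient{\Gamma}{\mathbb{H}}}\Bigl|N_r(z,w) - \tfrac{\vol(B_r)}{V_\Gamma}\Bigr|^2 dw \;=\; \sum_j |h(t_j)|^2\,|\phi_j(z)|^2 \;+\; (\text{Eisenstein}),
\]
which is controlled by the standard local Weyl law $\sum_{|t_j|\le T}|\phi_j(z)|^2 \ll V_\Gamma(1+T^2)$ combined with the decay of $h$. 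Choosing $e^r$ slightly larger than $V_\Gamma^2$ makes this variance beat $(\vol(B_r)/V_\Gamma)^2$, so Chebyshev's inequality forces the set of exceptional $w$ at a fixed $z$ to be of small measure. To upgrade to the supremum over $z$, I would cover $\lquotient{\Gamma}{\mathbb{H}}$ by $\ll V_\Gamma$ balls of bounded radius, note that $N_r(z,w)$ is Lipschitz in $z$ with a controlled constant (inherited from $k\circ d$), so that the value at the centre of each ball controls the supremum over that ball, and then take a union bound over the cover.

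For \eqref{eq:thm3eq2} the roles of $z$ and $w$ are interchanged and the spectral side must be controlled pointwise rather than in $L^2$-mean. Under Selberg's eigenvalue conjecture the exceptional spectrum is empty, so every $h(t_j)$ satisfies $|h(t_j)|\ll e^r/(1+|t_j|)^A$ with genuine oscillation, and the trivial contribution is $\ll e^{r/2}V_\Gamma^{1/2+\eps}$ after using Weyl's law in $|t_j|$; this beats $e^r/V_\Gamma$ precisely when $e^r \gg V_\Gamma^{1+\eps}$. Under the sup-norm conjecture for exceptional forms, the bound $|\phi_j(w)|\ll V_\Gamma^\eps$ for the finitely many exceptional $\phi_j$ controls their contribution directly, yielding the same threshold. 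The square-free case reduces to the second one via Jacquet--Langlands transfer to $\Gamma_0(DQ)$, together with the available $L^\infty$-bounds on exceptional forms at square-free level (in the style of Iwaniec--Sarnak). The main obstacles I anticipate are two: first, matching implicit constants precisely enough to get the factor $3V_\Gamma$ inside the logarithm (rather than merely $V_\Gamma$); and second, carrying out the Lipschitz-covering step cleanly so that supping over $z$ in the unconditional part costs only $o(1)$ in the exponent. The pre-trace-formula backbone itself is by now standard.
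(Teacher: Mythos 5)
Your framework (spectral expansion of a point-pair kernel plus Chebyshev) is the right starting point, but as written it does not reach the stated exponents, and the missing ingredients are precisely the ones the paper is built around. For the unconditional bound \eqref{eq:thm3}: the ``decay of $h$'' only controls the tempered spectrum; for an exceptional eigenvalue the transform is exponentially large, $h(t_j)\asymp e^{s_jr}$, and this is the dominant term in your variance (you never invoke $\lambda_1\ge \frac{3}{16}$, which is needed even to start). Granting Selberg's $3/16$ bound and the local bound $\sum_{\lambda_j\le 1/4}|\phi_j(z)|^2\ll V_\Gamma$ (in probability normalisation), your fixed-$z$ Chebyshev step gives a bad-$w$ set of measure $\ll V_\Gamma e^{-r/2}$, which is fine at $e^r\approx V_\Gamma^{2}$ --- but \eqref{eq:thm3} requires the supremum over $z$, and your covering-plus-union-bound over $\asymp V_\Gamma$ centres forces the per-centre bad measure to be $o(V_\Gamma^{-1})$, i.e. $e^{r/2}\gg V_\Gamma^{2}$, so you land at exponent $4$, not $2$. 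The paper takes the sup over $z$ at no cost by a different mechanism: Cauchy--Schwarz separates the $z$- and $w$-dependence, the $z$-dependent factor is bounded uniformly by the $L^2$-norm of a small ball ($\ll V_\Gamma^{-1/2}$, Lemma \ref{lem:KernelL2bound}, a Margulis-lemma argument), and the remaining $w$-dependent factor is integrated so that orthonormality reduces it to a pure count of exceptional eigenvalues, at which point the density estimate \eqref{eq:density} (transferred from $\Gamma_0(DQ)$ via Jacquet--Langlands) supplies the saving. That density estimate appears nowhere in your proposal, and without it the exponent $2$ with the supremum over $z$ is out of reach by this route.

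The conditional part has two further gaps. First, ``finitely many exceptional $\phi_j$'' is unjustified: there can be on the order of $V_\Gamma$ exceptional eigenvalues (most with $s_j$ near $\frac12$), so even under the sup-norm conjecture you still need the density estimate to see that forms with $s_j$ near $\frac34$ --- the dangerous ones, since $h(t_j)$ decays least for them --- are rare; with only a Weyl-law count your argument gives exponent $2$, not $1+o(1)$, in \eqref{eq:thm3eq2}. Second, the square-free case cannot be reduced to known sup-norm bounds: Iwaniec--Sarnak is an eigenvalue-aspect saving, and the available level-aspect bounds save only a small power of $Q$ over the local bound $V_\Gamma^{1/2}$, far from the $V_\Gamma^{\epsilon}$ strength your argument would need. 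This is exactly why the paper instead invokes the fourth-moment bound of Khayutin--Nelson--Steiner, $\sum_{\mathrm{exc}}\bigl(|u_j(g)|^2-|u_j(g_0)|^2\bigr)^2\ll_\epsilon V_\Gamma^{1+\epsilon}$, applied after subtracting the value at a good point $g_0$ produced by the averaged estimate; some such averaged substitute for the sup-norm conjecture is essential and is absent from your plan. (Minor: $\Gamma$ is co-compact, so there is no Eisenstein contribution; and the paper averages a small ball over spheres of radius $T$ rather than using one large ball, but that difference is cosmetic compared with the points above.)
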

The bound \eqref{eq:thm3eq2} for the almost diameter is sharp and one may speculate whether the actual diameter is around the same length. The latter would imply that the co-compact lattices under consideration are generated by its elements of norm $O_{\epsilon}(V_{\Gamma}^{1+\epsilon})$ and thus would bring it onto equal footing with Theorem \ref{thm:gennoncompact}. We should further remark that the bound depends on some Siegel-zero estimates and is thus not effective, respectively can be made effective with at most one exception. 

In order to prove Theorem \ref{thm:gencompact}, one could make use of Ratner's exponential mixing for $\SL{2}(\mathbb{R})$ \cite{RatnerSL2mixing}. This approach was taken by Chu--Li \cite{ChuLi}, who showed \eqref{eq:thm3} for every $w \in \mathbb{H}$ with $2$ replaced by $2.56$. Instead, we simplify the argument a bit by using the operator which averages over a sphere of a given radius. The latter operator was employed by Golubev--Kamber \cite{KamberCutoff} who showed, amongst many results of related nature, that the almost diameter under the Selberg eigenvalue conjecture is bounded by $\log(3V_{\Gamma})+(2+o(1))\log\log(9V_{\Gamma})$ under the mild assumption that $\lquotient{\Gamma}{\mathbb{H}}$ has not too many points of small injectivity radius.


Our improvement compared to previous results comes from the incorporation of a density estimate for exceptional eigenvalues as well as the newly available fourth moment bound for Maass forms in the level aspect by Khayutin--Nelson--Steiner \cite{Maasstheta4moment}.


At last, we would also like to touch on the closely related problem of bounding the diameter of expander graphs. A rich family of expander graphs, so-called Ramanujan graphs, may be constructed from arithmetic data associated to \emph{definite} quaternion algebras \cite{LPSGraph, PizerRamanujan}. These Ramanujan graphs (by definition) enjoy a large spectral gap, which in turn yields a small upper bound on the diameter. The incorporation of a density estimate for large eigenvalues for (homogeneous) expander graphs has proved valuable in showing that they admit a smaller diameter than what could be directly inferred from their spectral gap \cite{KamberDens}. In Section \S\ref{sec:graphs}, we demonstrate the usefulness of the fourth moment bound of Khayutin--Nelson--Steiner \cite{Maasstheta4moment} also in the context of expander graphs, by proving upper bounds on the diameter of certain Ramanujan graphs \emph{without} the use of the Ramanujan bound which are of \emph{equal} strength.



\begin{acknowledgements} I am grateful to Ilya Khayutin and Paul Nelson for fruitful conversations on this and related topics as well as comments on earlier drafts. I am further thankful to Peter Sarnak and Amitay Kamber for their encouragement and conversations on the subject, as well as Davide Ravotti and Miko\l{}aj Fr\k{a}czyk for clarifying various results that went into this paper.

The work on this manuscript began at the Institute for Advanced Study, where I was supported by the National Science Foundation Grant No. DMS -- 1638352 and the Giorgio and Elena Petronio Fellowship Fund II, and completed at the Institute for Mathematical Research (FIM) at ETH Z\"urich.
\end{acknowledgements}

\section{Co-Compact Lattices}\label{sec:co-comp} Write $G=\SL{2}(\mathbb{R})$ and $K=\SO{2}(\mathbb{R})$ for short. Let $\mu$ denote the Haar measure on $G$ normalised such that $d\mu(n(x)a(y)k(\theta))=\frac{1}{y^2}dxdyd\theta$, where
$$
n(x) = \begin{pmatrix} 1 & x \\ 0 & 1\end{pmatrix}, \quad a(y) = \begin{pmatrix} y^{\frac{1}{2}} & 0 \\ 0 & y^{-\frac{1}{2}} \end{pmatrix}, \quad k(\theta) = \begin{pmatrix} \cos \theta & -\sin \theta   \\ \sin \theta & \cos \theta \end{pmatrix}.
$$
Let $\Gamma$ be a co-compact lattice as in the introduction. Denote by $V_{\Gamma}$ the co-volume of $\Gamma$ with respect to $\mu$. Further, $\mu$ descends to a finite measure $\nu$ on $\lquotient{\Gamma}{G}$, which we normalise to a probability measure. With $\nu_{\star}$ we denote the push forward of $\nu$ to $\lrquotient{\Gamma}{G}{K}$. Let $\{u_j\}_j$ be an orthonormal basis of Hecke--Maass forms on $L^2(\lrquotient{\Gamma}{G}{K},\nu_{\star})$, which we may also regard as an orthonormal basis of the $K$-invariant subspace of $L^2(\lquotient{\Gamma}{G},\nu)$. We denote the Laplace eigenvalue of $u_j$ with $-\lambda_j=-(\frac{1}{4}+t_j^2)$, where $t_j \in \mathbb{R} \cup i[0,\frac{1}{2}]$. We let $d$ denote the hyperbolic distance on the upper half-plane $\mathbb{H} \cong \rquotient{G}{K}$ and $u$ the related quantity
$$
u(z,w)= \tfrac{1}{2}(\cosh (d(z,w))-1) = \frac{|w-z|^2}{4 \im(z) \im(w)}.
$$
We note that 
$$
u(z,w) = \tfrac{1}{4} \tr(gg^{t})-\tfrac{1}{2} =: u(g)
$$
where $g$ is any matrix that takes $z$ to $w$ and $u:G \to \mathbb{R}^+_0$ is left and right $K$-invariant. Let $S: \mathbb{R}^{+}_0 \to [0,1]$ be a smooth bump function supported on $[0,\delta]$ for some small $\delta>0$, such that its Selberg/Harish-Chandra transform
$$
h(t) = 4\pi \int_{0}^{\infty} S(u) \cdot {}_2F_1(\tfrac{1}{2}+it,\tfrac{1}{2}-it;1;-u) du
$$
is non-negative and 
\begin{equation}
h(\pm \tfrac{i}{2}) = 4 \pi \int_0^{\infty} S(u) du \asymp 1,
\label{eq:htrivial}
\end{equation}
where the implied constants may depend on $\delta$. We note that 
\begin{equation}
|h(t)| \le 4\pi \int_0^{\infty} |S(u)| du = h( \pm \tfrac{i}{2} ) \ll 1.
\label{eq:hineq}
\end{equation}
Let 
$$
B(g_1,g_2) = \sum_{\gamma \in \Gamma} S( u(g_2^{-1}\gamma g_1)),
$$
where $S(u(g_2^{-1}g_1))$ is a point-pair invariant, and thus we get the spectral expansion (cf. \cite[Theorem 1.14]{IwSpecMeth})
\begin{equation}
B(g_1,g_2) = \frac{2 \pi}{ V_{\Gamma} } \sum_j h(t_j) u_j(g_1)\overline{u_j(g_2)}.
\label{eq:ballspecexp}
\end{equation}
$B_g:=B(g,\cdot)$ will take the role of a smooth right $K$-invariant ball on $\lquotient{\Gamma}{G}$. The following lemma is of crucial importance. Essentially, it says that $B_g$ may be compared to an Euclidean ball.
\begin{lem} Suppose $\delta$ is sufficiently small but (strictly) positive. Then, we have for any $g_1 \in G$ that
$$
\int_{\lquotient{\Gamma}{G}} |B(g_1,g_2)|^2 d\nu(g_2) \ll \frac{1}{V_{\Gamma}},
$$
where the implied constant may depend on $\delta$.
\label{lem:KernelL2bound}
\end{lem}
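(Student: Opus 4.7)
The plan is to combine the spectral expansion \eqref{eq:ballspecexp} with Parseval's identity to reduce the problem to a uniform pointwise bound on $B(g_1,g_1)$; this bound will then follow from the integrality of the trace on the quaternion order together with the Margulis lemma.

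Since $B(g_1,\cdot)$ is right $K$-invariant, Parseval in $L^2(\lrquotient{\Gamma}{G}{K},\nu_{\star})$ applied to \eqref{eq:ballspecexp} will give
\[
\int_{\lquotient{\Gamma}{G}}|B(g_1,g_2)|^2\,d\nu(g_2) \;=\; \frac{4\pi^2}{V_{\Gamma}^2}\sum_j |h(t_j)|^2\,|u_j(g_1)|^2.
\]
Using $h\geq 0$ together with $|h(t_j)|\leq h(\pm \tfrac{i}{2})\ll 1$ from \eqref{eq:hineq}, I have $|h(t_j)|^2\leq h(\pm \tfrac{i}{2})\,h(t_j)\ll h(t_j)$. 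Substituting and then applying \eqref{eq:ballspecexp} at $g_2=g_1$ (the pre-trace formula) yields
\[
\int_{\lquotient{\Gamma}{G}}|B(g_1,g_2)|^2\,d\nu(g_2) \;\ll\; \frac{B(g_1,g_1)}{V_{\Gamma}} \;=\; \frac{1}{V_{\Gamma}}\sum_{\gamma\in\Gamma} S\bigl(u(g_1^{-1}\gamma g_1)\bigr),
\]
so the whole problem reduces to the uniform pointwise bound $B(g_1,g_1)=O_{\delta}(1)$.

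To establish the pointwise bound I first shrink the effective range of summation. From $|\tr(g)|^2\leq 2\tr(gg^t) = 8u(g)+4$ one sees that any $\gamma$ contributing to $B(g_1,g_1)$ satisfies $|\tr(\gamma)|=|\tr(g_1^{-1}\gamma g_1)|<3$ once $\delta$ is sufficiently small. Since the reduced trace on the Eichler order lies in $\mathbb{Z}$, this forces $\tr(\gamma)\in\{-2,-1,0,1,2\}$; cocompactness rules out parabolics, so $\tr(\gamma)=\pm 2$ forces $\gamma=\pm I$, and the remaining contributions come from elliptic elements of order $3$, $4$, or $6$.

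The main obstacle is bounding the count of these elliptic elements uniformly in $g_1$ and $\Gamma$. For this I would invoke the Margulis lemma for $\SL{2}(\BR)$: there is a universal $\epsilon>0$ such that, for any discrete $\widetilde{\Gamma}\subset\SL{2}(\BR)$ and any $z\in\mathbb{H}$, the set $\{\widetilde{\gamma}\in\widetilde{\Gamma}:d(z,\widetilde{\gamma}z)\leq\epsilon\}$ is contained in an elementary (virtually abelian) subgroup of $\widetilde{\Gamma}$. Shrinking $\delta$ so that $\arccosh(1+2\delta)<\min\{\epsilon,\,2\arccosh(3/2)\}$---the second threshold being the minimum translation length of any hyperbolic element (again by $|\tr|\geq 3$)---and applying the lemma to $\widetilde{\Gamma}=g_1^{-1}\Gamma g_1$ at $z=i$, the relevant elementary subgroup contains no non-trivial hyperbolic element, and so must be finite cyclic. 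Its order is bounded by the torsion in arithmetic Fuchsian groups over $\mathbb{Q}$ (at most $12$), giving $B(g_1,g_1)=O_{\delta}(1)$ uniformly, as required.
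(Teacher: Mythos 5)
Your spectral reduction is sound and, in essence, follows the same route as the paper: non-negativity of $h$ together with \eqref{eq:hineq} gives $|h(t_j)|^2\ll h(t_j)$, so by Parseval and \eqref{eq:ballspecexp} on the diagonal the $L^2$-norm is dominated by $V_{\Gamma}^{-1}B(g_1,g_1)$, and everything reduces to showing that only $O_{\delta}(1)$ elements $\gamma\in\Gamma$ satisfy $u(g_1^{-1}\gamma g_1)\le\delta$ (the paper reaches the same count via the convolution $(S\circ u)\star(S\circ u)$, which is a cosmetic difference). Your exclusion of hyperbolic and parabolic elements via integrality of the reduced trace and cocompactness is exactly the paper's argument.

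The gap is in the final step. The Margulis lemma gives you that the short elements at $z=i$ generate an elementary subgroup $\widetilde{\Gamma}\subset g_1^{-1}\Gamma g_1$, and you then assert that, since none of the short generators is hyperbolic, $\widetilde{\Gamma}$ ``contains no non-trivial hyperbolic element, and so must be finite cyclic.'' That inference is invalid: a group generated by elliptic elements can contain hyperbolic elements --- the product of two rotations by $\pi$ about distinct points is a hyperbolic translation along the geodesic through those points --- and one of the admissible elementary types is precisely an infinite dihedral group generated by two elliptic involutions. This is case (iii) in the paper's proof, which your case analysis silently discards; in that case your bound ``order at most $12$'' is vacuous because $\widetilde{\Gamma}$ is infinite. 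The count you actually need is still true and can be restored in either of two ways: (a) as in the paper, observe that in the dihedral case the fixed points of the involutions are spread along the invariant geodesic with spacing at least half the minimal translation length, which is $\ge\arccosh(3/2)$ by $|\tr|\ge 3$, so only boundedly many fixed points lie within $\tfrac12\arccosh(1+2\delta)$ of $i$ and hence only boundedly many involutions are short; or (b) note directly that two short involutions would have fixed points at distance at most $\arccosh(1+2\delta)$, so their product would be hyperbolic of translation length $<2\arccosh(3/2)$, contradicting $|\tr|\ge 3$; hence all short elliptic elements share one fixed point and, together with $\pm I$, lie in a finite cyclic group whose order is at most $6$ (not $12$: an element of order $8$ or $12$ would have trace $\pm\sqrt2$ or $\pm\sqrt3\notin\BZ$). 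With either patch you recover $B(g_1,g_1)=O_{\delta}(1)$ and the lemma.
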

\begin{proof} We have
$$\begin{aligned}
\int_{\lquotient{\Gamma}{G}} |B(g_1,g_2)|^2 d\nu(g_2) &= \frac{4\pi^2}{V_{\Gamma}^2} \sum_j |h(t_j)|^2 |u_j(g_1)|^2 \\
&= \frac{2\pi}{V_{\Gamma}} \sum_{\gamma \in \Gamma} ((S \circ u) \star (S \circ u))(g_1^{-1}\gamma g_1), 
\end{aligned}$$
where the convolution is taken on $G$. We note that $(S \circ u) \star (S \circ u)$ is bounded for fixed $\delta$. Moreover, it is supported on $g \in G$ with $u(g) \le 4\delta(1+\delta)$. We now note that $u(g) \ge \frac{(\tr g)^2}{4}-1$. Thus, if $\delta$ is sufficiently small the sum over all hyperbolic $\gamma \in \Gamma$ is zero since for those $\tr \gamma$ is at least $3$. Now again for $\delta$ sufficiently small, by the Margulis' Lemma (cf. \cite[Section 3.1]{MikolajsmallBetti}), the subgroup $\widetilde{\Gamma}$ generated by the remaining $\gamma$ for which $u(g_1^{-1}\gamma g_1) \le 4\delta(1+\delta)$ is virtually abelian and in particular of one of the following types
\begin{enumerate}[(i)]
	\item An infinite cyclic group generated by an hyperbolic or parabolic isometry;
	\item A finite cyclic group generated by an elliptic isometry;
	\item An infinite dihedral group generated by two elliptic isometries of
order 2.
\end{enumerate}
The first case only contains the elliptic elements plus minus the identity. In the second case, $\widetilde{\Gamma}$ is finite and of order bounded by $6$ as the characteristic polynomial is a cyclotomic polynomial of degree at most two over $\mathbb{Q}$. In the third case, the subgroup $\widetilde{\Gamma}$ fixes a geodesic and all elliptic elements not equal to plus minus the identity fix a single point on this geodesic (and perform a rotation by $\pi$ around the point). These fixpoints are evenly spread along the geodesic by the distance corresponding to the translation of the minimal hyperbolic element in $\widetilde{\Gamma}$. Thus, if $\delta$ is small enough there are at most four elliptic elements $\gamma \in \widetilde{\Gamma}$ such $u(g_1^{-1}\gamma g_1) \le 4\delta(1+\delta)$. We conclude the lemma.
\end{proof} 



We now get to the heart of the argument. We shall use an averaging operator which averages over a sphere of radius $T$: 
$$
A_Tf(z) = \frac{1}{2\pi} \int_0^{2 \pi} f\left(g_z k(\theta) \sm e^{\frac{T}{2}} & 0 \\ 0 & e^{-\frac{T}{2}} \esm  \right) d\theta,
$$
where $g_z$ is any matrix that takes $i$ to $z$.

We shall prove the following proposition.

\begin{prop} For $T\ge 1$, we have
\begin{multline}
\int_{\lquotient{\Gamma}{G}} \sup_{g_2 \in \lrquotient{\Gamma}{G}{K}} \left| \langle A_T B_{g_1} , B_{g_2} \rangle-\langle B_{g_1} , u_0 \rangle \overline{\langle B_{g_2} , u_0 \rangle } \right|^2 d\nu(g_1) \\
\ll_{\epsilon} T^2 V_{\Gamma}^{-2+\epsilon} \left(e^{-T}  + e^{-\frac{T}{2}} V_{\Gamma}^{-1} \right).
\label{eq:meansquare}
\end{multline}
Assume either of the following conditions:
\begin{itemize}
	\item $Q$ is square-free,
	\item Selberg's eigenvalue conjecture for $\Gamma_0(DQ)$,
	\item the sup-norm conjecture in the level aspect for exceptional eigenforms on \\ $\lrquotient{\Gamma}{\SL{2}(\mathbb{R})}{\SO{2}(\mathbb{R})}$,
\end{itemize}
then we have the stronger bound
\begin{equation}
\|A_TB_g-\langle B_g,u_0 \rangle u_0\|_2^2  \ll_{\epsilon} T^2V_{\Gamma}^{-2+\epsilon} \left( e^{-T}V_{\Gamma}+e^{-\frac{T}{2}}V_{\Gamma}^{\frac{1}{2}}  \right).
\label{eq:L2bound}
\end{equation}
\label{prop:ballavg}
\end{prop}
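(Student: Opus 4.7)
The plan is to substitute the spectral expansion \eqref{eq:ballspecexp} into the inner product $\langle A_T B_{g_1}, B_{g_2}\rangle$ and exploit the fact that the sphere-average $A_T$ diagonalises Hecke--Maass forms: each right-$K$-invariant $u_j$ is an eigenfunction of $A_T$ with eigenvalue $\phi_{t_j}(T)$, the value at hyperbolic distance $T$ of the classical spherical function attached to the parameter $t_j$. Recalling $\phi_{i/2}(T)=1$ (since $u_0$ is constant) and that $h$ is real valued, this gives
$$
\langle A_T B_{g_1}, B_{g_2}\rangle = \frac{4\pi^2}{V_\Gamma^2} \sum_j h(t_j)^2 \phi_{t_j}(T)\, u_j(g_1)\overline{u_j(g_2)},
$$
and subtracting the $j=0$ term identifies the quantity in the proposition with
$$
\langle A_T B_{g_1} - \langle B_{g_1}, u_0\rangle u_0,\; B_{g_2} - \langle B_{g_2}, u_0\rangle u_0\rangle.
$$
The needed asymptotics of $\phi_{t_j}(T)$ are standard: $|\phi_{t_j}(T)|\ll T e^{-T/2}$ in the tempered range $t_j\in\mathbb{R}$, and $|\phi_{is_j}(T)|\ll e^{(s_j-1/2)T}$ in the exceptional range $s_j\in(0,1/2)$.

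For the mean-square bound \eqref{eq:meansquare} I would apply Cauchy--Schwarz to the inner-product form, and use Lemma~\ref{lem:KernelL2bound} to bound $\|B_{g_2}-\langle B_{g_2},u_0\rangle u_0\|_2\le\|B_{g_2}\|_2\ll V_\Gamma^{-1/2}$ uniformly in $g_2$. This removes the supremum and reduces matters to
$$
\int_{\lquotient{\Gamma}{G}}\|A_T B_{g_1}-\langle B_{g_1},u_0\rangle u_0\|_2^2\,d\nu(g_1) \;=\; \frac{4\pi^2}{V_\Gamma^2}\sum_{j\ne 0}|h(t_j)|^2|\phi_{t_j}(T)|^2
$$
by Parseval and $\|u_j\|_2=1$. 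Splitting the spectrum: the tempered part uses $|\phi_{t_j}(T)|^2\ll T^2 e^{-T}$ together with $\sum_j|h(t_j)|^2\ll V_\Gamma$, which follows from integrating Lemma~\ref{lem:KernelL2bound} over $g_1$; the exceptional part $\sum_{s_j\in(0,1/2)}e^{(2s_j-1)T}$ is controlled by a density estimate for exceptional eigenvalues in the level aspect, sharpened via the fourth-moment bound of Khayutin--Nelson--Steiner advertised in the introduction. Multiplying by the $V_\Gamma^{-1}$ coming from the Cauchy--Schwarz step then yields \eqref{eq:meansquare}.

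For the pointwise $L^2$ bound \eqref{eq:L2bound}, Parseval directly gives
$$
\|A_T B_g-\langle B_g,u_0\rangle u_0\|_2^2 \;=\; \frac{4\pi^2}{V_\Gamma^2}\sum_{j\ne 0}|h(t_j)|^2|\phi_{t_j}(T)|^2 |u_j(g)|^2,
$$
and one cannot average away $|u_j(g)|^2$. For tempered $t_j$ I would again pull out $T^2 e^{-T}$ and invoke Lemma~\ref{lem:KernelL2bound} at the single point $g$ to get $\sum_j|h(t_j)|^2|u_j(g)|^2\ll V_\Gamma$. For exceptional $t_j$ the three hypotheses enter to supply the missing sup-norm control: Selberg's conjecture removes the exceptional spectrum outright; for square-free $Q$ the exceptional Maass forms are theta lifts of $\mathrm{GL}_1$ data and admit a direct pointwise bound; the sup-norm hypothesis is the required input. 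Combined with the same density estimate, the exceptional contribution comes out $\ll T^2 V_\Gamma^{-3/2+\epsilon}e^{-T/2}$, matching \eqref{eq:L2bound}.

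The hard part is the exceptional spectrum throughout: the unconditional mean-square bound rests on the recently available density/fourth-moment estimates of Khayutin--Nelson--Steiner, while the pointwise bound additionally demands sup-norm control on exceptional eigenforms, which is precisely the role of the three listed hypotheses.
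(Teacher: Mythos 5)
Your treatment of the mean-square bound \eqref{eq:meansquare} follows the paper's route (spectral expansion, spherical eigenvalues of $A_T$, Cauchy--Schwarz against $\|B_{g_2}\|_2\ll V_\Gamma^{-1/2}$ from Lemma \ref{lem:KernelL2bound}, then Parseval in $g_1$ and the density estimate), and that part is fine up to one omission noted below. The genuine gap is in the unconditional (square-free $Q$) case of \eqref{eq:L2bound}, which is the actual novelty. Your mechanism there --- that ``exceptional Maass forms are theta lifts of $\mathrm{GL}_1$ data and admit a direct pointwise bound'' --- is not correct: dihedral/CM forms are tempered at the archimedean place, so hypothetical exceptional eigenforms are not $\mathrm{GL}_1$ theta lifts, and no individual sup-norm bound for them is known at square-free level (if it were, the sup-norm hypothesis in the statement would be redundant). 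The paper instead uses the averaged estimate \eqref{eq:averagedbound} to produce a point $g_0$ at which the exceptional weighted sum is already of the admissible size, writes $|u_j(g)|^2=|u_j(g_0)|^2+\bigl(|u_j(g)|^2-|u_j(g_0)|^2\bigr)$, and bounds the difference term by Cauchy--Schwarz, pairing the density estimate (applied to $\sum T^4 e^{-\cdots}$) with the Khayutin--Nelson--Steiner fourth-moment bound $\sum_{\mathrm{exc}}\bigl(|u_j(g)|^2-|u_j(g_0)|^2\bigr)^2\ll_\epsilon V_\Gamma^{1+\epsilon}$; this comparison-point trick is exactly where square-freeness (or, alternatively, sup-norm plus Weyl's law) enters, and it is absent from your proposal. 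Relatedly, you invoke the fourth moment to ``sharpen'' the density estimate in the proof of \eqref{eq:meansquare}, where it plays no role.

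A second, substantive omission affecting both parts: you never use $\lambda_1\ge\frac{3}{16}$ (Selberg's theorem together with Jacquet--Langlands), i.e.\ the cap $s_j\le\frac34$ on exceptional parameters. The density estimate \eqref{eq:density} by itself allows $V_\Gamma^{o(1)}$ eigenvalues arbitrarily close to $0$, for which the spherical eigenvalue has essentially no decay in $T$; without the cap your exceptional sums acquire a term with no $e^{-T/2}$ saving (of size about $T^2V_\Gamma^{-3+\epsilon}$ in \eqref{eq:meansquare}), so the stated bounds --- and the application with $T\approx 2\log V_\Gamma$ in Theorem \ref{thm:diamcompact} --- would not follow. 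You should also say a word on why the $\Gamma_0(DQ)$ density estimate applies to the quaternionic lattice $\Gamma$ (Jacquet--Langlands transfer with bounded multiplicity), as the paper does.
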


Before proceeding with the proof of the proposition, we shall show how Theorem \ref{thm:diamcompact} follows from it. The first half of the Theorem follows from choosing $T=(2+o(1)) \log 3 V_{\Gamma}$ in \eqref{eq:meansquare}. As the main term
$$
\langle  B_{g_1} , u_0 \rangle \overline{\langle B_{g_2} , u_0 \rangle } = \frac{4 \pi^2}{V_{\Gamma}^2} |h(\tfrac{i}{2})|^2 \asymp V_{\Gamma}^{-2},
$$
it follows that $\langle A_T B_{g_1} , B_{g_2} \rangle>0$ for most $g_1$. In order to prove the second half of the theorem, we shall use \eqref{eq:L2bound} with $T_0=(1+o(1)) \log 3 V_{\Gamma}$. We then find
$$
\nu_{\star}(\{z \in {\textstyle \lrquotient{\Gamma}{G}{K}} \, | \, A_{T_0}B_g(z)=0 \}) \ll V_{\Gamma}^2 \|A_{T_0}B_g-\langle B_g,u_0 \rangle u_0\|_2^2 = o(1).
$$

\begin{proof}[Proof of Prop. \ref{prop:ballavg}] We first note that each $u_i$ is an eigenfunction of $A_T$ with eigenvalue ${}_2F_1(\frac{1}{2}-it_i,\frac{1}{2}+it_i;1;\frac{1}{2}-\frac{1}{2}\cosh(T))=P_{-\frac{1}{2}-it_i}(\cosh(T))$, cf. \cite[Corollary 1.13]{IwSpecMeth}. We have that this eigenvalue is bounded by
$$
\begin{cases} (T+1)e^{-\frac{T}{2}(1-\sqrt{1-4\lambda_i})}, & 0\le \lambda_i \le \frac{1}{4}, \\
(T+1)e^{-\frac{T}{2}}, & \frac{1}{4} \le \lambda_i,  \end{cases}
$$
cf. \cite[Prop. 2.3 \& 7.2]{KamberCutoff}. Hence, after referring to the spectral expansion \eqref{eq:ballspecexp}, we find that
\begin{multline}
\langle A_{T} B_{g_1} , B_{g_2} \rangle - \langle  B_{g_1} , u_0 \rangle \overline{\langle B_{g_2} , u_0 \rangle }   \\
= O \left( \frac{1}{V_{\Gamma}^2} \sum_{0<\lambda_j < \frac{1}{4}} T\left(e^{-\frac{T}{2}} \right)^{1-\sqrt{1-4\lambda_j}} |h(t_j)|^2 |u_j(g_1)u_j(g_2)| \right)+O\left( Te^{-\frac{T}{2}} \|B_{g_1}\|_2 \|B_{g_2}\|_2 \right)
\label{eq:Ratnerexpmixing}
\end{multline}
for any two $g_1,g_2 \in G$ and $T \ge 1$. We note that the second smallest eigenvalue $\lambda_1 \ge \frac{3}{16}$, due to Selberg \cite{SelbergFourier} and the Jacquet--Langlands correspondence \cite{JacquetLanglands}. By Lemma \ref{lem:KernelL2bound}, we find that the second error term is bounded by $O(Te^{-\frac{T}{2}} V_{\Gamma}^{-1})$. It remains to deal with the first error term. 
By Cauchy--Schwarz, we may estimate
\begin{multline*}
\frac{1}{V_{\Gamma}^2} \sum_{0<\lambda_j < \frac{1}{4}} T\left(e^{-\frac{T}{2}} \right)^{1-\sqrt{1-4\lambda_j}} |h(t_j)|^2 |u_j(g_1)u_j(g_2)| \\
\le \left(\frac{1}{V_\Gamma^2} \sum_{\frac{3}{16}\le \lambda_j < \frac{1}{4}} |h(t_j)|^2 |u_j(g_2)|^2 \right)^{\frac{1}{2}}  \left(\frac{1}{V_\Gamma^2} \sum_{\frac{3}{16}\le \lambda_j < \frac{1}{4}} T^2\left(e^{-\frac{T}{2}}\right)^{2(1-\sqrt{1-4\lambda_j})} |h(t_j)|^2 |u_j(g_1)|^2 \right)^{\frac{1}{2}}   \\
\ll \|B_{g_2}\|_2 \left(\frac{1}{V_\Gamma^2} \sum_{\frac{3}{16}\le \lambda_j < \frac{1}{4}} T^2\left(e^{-\frac{T}{2}}\right)^{2(1-\sqrt{1-4\lambda_j})} |h(t_j)|^2 |u_j(g_1)|^2 \right)^{\frac{1}{2}} \\
\ll \frac{1}{V_{\Gamma}^{\frac{1}{2}}} \left(\frac{1}{V_\Gamma^2} \sum_{\frac{3}{16}\le \lambda_j < \frac{1}{4}} T^2\left(e^{-\frac{T}{2}}\right)^{2(1-\sqrt{1-4\lambda_j})} |h(t_j)|^2 |u_j(g_1)|^2 \right)^{\frac{1}{2}}.
\end{multline*}
Thus, we find
\begin{multline*} \int_{\lquotient{\Gamma}{G}} \sup_{g_2 \in \lrquotient{\Gamma}{G}{K}} \left| \langle A_T  B_{g_1} , B_{g_2} \rangle-\langle B_{g_1} , u_0 \rangle \overline{\langle B_{g_2} , u_0 \rangle } \right|^2 d\nu(g_1) \\
\ll \frac{1}{V_{\Gamma}^3} \left( \sum_{\frac{3}{16}\le \lambda_j < \frac{1}{4}} T^2\left(e^{-\frac{T}{2}}\right)^{2(1-\sqrt{1-4\lambda_j})} |h(t_j)|^2 \int_{\lquotient{\Gamma}{G}} |u_j(g_1)|^2 d\nu(g_1) \right) + T^2e^{-T} V_{\Gamma}^{-2} \\
\ll \frac{1}{V_{\Gamma}^3} \left( \sum_{\frac{3}{16}\le \lambda_j < \frac{1}{4}} T^2\left(e^{-\frac{T}{2}}\right)^{2(1-\sqrt{1-4\lambda_j})} \right) + T^2e^{-T} V_{\Gamma}^{-2}.
\end{multline*}
The sum over the exceptional eigenvalues we may bound using the density estimate (cf. \cite[Theorem 11.7]{IwSpecMeth})
\begin{equation}
\sharp \{j>0 \, | \, s_j > \sigma \} \ll_{\epsilon} (DQ)^{3-4\sigma+\epsilon},
\label{eq:density}
\end{equation}
for any $\sigma \ge \frac{1}{2}$, where $s_j=\frac{1}{2}-it_j$. We note that this density estimate, which a priori holds for $\Gamma_0(DQ)$, also holds for the lattice $\Gamma$ under consideration. 
This is the case since we may consider an explicit Jacquet--Langlands transfer of a form on $\Gamma$ to a form on $\Gamma_0(DQ)$ (which has the same eigenvalue). We note that under any such identification at most $Q^{o(1)}$ forms get mapped onto the same image. This follows from the considerations in \cite{Casselman} and noting that at the places dividing $D$ the corresponding representations are one-dimensional. Using the estimate \eqref{eq:density}, we find
\begin{equation}\begin{aligned}
\sum_{\frac{3}{16}\le \lambda_j < \frac{1}{4}} T^2\left(e^{-\frac{T}{2}}\right)^{2(1-\sqrt{1-4\lambda_j})} &= T^2 \sum_{\frac{1}{2} < s_j \le \frac{3}{4}} \left(e^{\frac{T}{2}}\right)^{4(s_j-1)} \\
&\ll_{\epsilon} (DQ)^{\epsilon} T^2\left(e^{-T} (DQ) + e^{-\frac{T}{2}}\right) \\
& \ll_{\epsilon} V_{\Gamma}^{\epsilon} T^2 \left( e^{-T} V_{\Gamma}+e^{-\frac{T}{2}} \right),
\label{eq:averagedbound}
\end{aligned}\end{equation}
Hence, we conclude the first part of the proposition. For the second part, we need to bound
\begin{multline*}
\|A_TB_g-\langle B_g,u_0 \rangle u_0\|_2^2 \\
 \ll  \frac{1}{V_{\Gamma}^2} \sum_{\frac{3}{16} \le \lambda_j < \frac{1}{4}} T^2\left(e^{-\frac{T}{2}} \right)^{2(1-\sqrt{1-4\lambda_j})} |h(t_j)|^2 |u_j(g)|^2 + T^2e^{-T} \|B_{g}\|_2^2.
\end{multline*}
We note that the second summand is $\ll T^2e^{-T}V_{\Gamma}^{-1}$, which is satisfactory. Furthermore, we find that the first summand is empty if we assume Selberg's eigenvalue conjecture, which takes care of that case. In the other cases, we first recall \eqref{eq:averagedbound}, which implies there is a $g_0 \in \lquotient{\Gamma}{G}$ such that
$$
\frac{1}{V_{\Gamma}^2} \sum_{\frac{3}{16} \le \lambda_j < \frac{1}{4}} T^2\left(e^{-\frac{T}{2}} \right)^{2(1-\sqrt{1-4\lambda_j})} |h(t_j)|^2 |u_j(g_0)|^2 \ll_{\epsilon} T^2 V_{\Gamma}^{-2+\epsilon} \left( e^{-T} V_{\Gamma}+e^{-\frac{T}{2}} \right). 
$$
Thus, in order to prove \eqref{eq:L2bound}, it is sufficient to bound
\begin{multline*}
\frac{1}{V_{\Gamma}^2} \sum_{\frac{3}{16} \le \lambda_j < \frac{1}{4}} T^2\left(e^{-\frac{T}{2}} \right)^{2(1-\sqrt{1-4\lambda_j})} |h(t_j)|^2 \left(|u_j(g)|^2 -|u_j(g_0)|^2\right) \\
\le \frac{1}{V_{\Gamma}^2} \left( \sum_{\frac{3}{16}\le \lambda_j < \frac{1}{4}} T^4\left(e^{-\frac{T}{2}}\right)^{4(1-\sqrt{1-4\lambda_j})} \right)^{\frac{1}{2}} \left( \sum_{\frac{3}{16}\le \lambda_j < \frac{1}{4}} \left(|u_j(g)|^2-|u_j(g_0)|^2 \right)^2 \right)^{\frac{1}{2}}.
\end{multline*}
If we assume that the level $Q$ of $R$ is square-free, then \cite[Theorem 4]{Maasstheta4moment} shows that
$$
\sum_{\frac{3}{16}\le \lambda_j < \frac{1}{4}} \left(|u_j(g)|^2 -|u_j(g_0)|^2 \right)^2 \ll_{\epsilon} V_{\Gamma}^{1+\epsilon}.
$$
The same conclusion also holds if we assume the sup-norm conjecture and referring to Weyl's law. Finally, we may estimate the first factor by once more refering to the density estimate \eqref{eq:density}:
$$
\sum_{\frac{3}{16}\le \lambda_j < \frac{1}{4}} T^4\left(e^{-\frac{T}{2}}\right)^{4(1-\sqrt{1-4\lambda_j})} \ll_{\epsilon} V_{\Gamma}^{\epsilon}T^4 \left( e^{-2T} V_{\Gamma}+e^{-T} \right).
$$
We conclude the proof.
\end{proof}

We are left to infer Theorem \ref{thm:gencompact} from Theorem \ref{thm:diamcompact}. Suppose the stabiliser of the point $i \in \mathbb{H}$ consists only of plus minus the identity and consider the normal polygon $\mathcal{F}_{\Gamma,i}$. Suppose further that $\mathcal{F}_{\Gamma,i}$ is contained in a ball of radius $r$. We shall now translate this picture from the upper half-plane to the Poincar\'e disk through the Cayley transformation $\phi: \mathbb{H} \to \mathfrak{D}$, which maps $z\mapsto (z-i)/(z+i)$. Under this map, $\mathcal{F}_{\Gamma,i}$ gets mapped to a Ford domain $\mathfrak{F}_{\Gamma}$. A motion $\gamma = \sm a & b \\ c & d \esm \in \SL{2}(\mathbb{R})$ gets transferred to the motion 
\begin{equation}
\gamma^{\phi} = \phi \gamma \phi^{-1} = \begin{pmatrix} \frac{a+d}{2}+i\frac{b-c}{2}  & \frac{a-d}{2}-i\frac{b+c}{2} \\ \frac{a-d}{2}+i\frac{b+c}{2} & \frac{a+d}{2}-i\frac{b-c}{2} \end{pmatrix} =: \begin{pmatrix} \overline{F} & \overline{E} \\ E & F\end{pmatrix}
\label{eq:transformed}
\end{equation}
in $\mathrm{SU}(1,1)$. Now, Ford \cite{FordDomain} proved that $\Gamma^{\phi}=\phi \Gamma \phi^{-1}$ is generated by the motions $\gamma^{\phi} \in \Gamma^{\phi}$ whose partial arc of its isometric circle forms part of the boundary of $\mathfrak{F}_{\Gamma}$. We have that the isometric circle corresponding to the motion \eqref{eq:transformed} is given by the equation $|Ez+F|$,
a circle with radius $1/|E|$ and centre $-F/E$. Thus, in order for the isometric circle of $\gamma^{\phi}$ to intersect $\mathfrak{F}_{\Gamma}$ one must have $(|F|-1)/|E| \le \artanh(\frac{r}{2})$, which after a calculation yields $|E| \le \sinh(r)$ and thus
$$
\|\gamma\|_F^2 = 4|E|^2+2 \ll e^{2r}.
$$
Thus we may conclude Theorem \ref{thm:gencompact} from Theorem \ref{thm:diamcompact} after noting that the points $z\in \mathbb{H}$ with non-trivial stabiliser are a null-set, hence they may be excluded for the first part and for the second part one may conjugate the group by a tiny bit if $i$ happens to be such a point.

\section{Non-Co-Compact Lattices}
\label{sec:stdpolygon}

In this section, we shall bound the standard polygon \eqref{eq:standardpolygon}. The following encapsulation of the standard polygon came about whilst working on \cite{Maasstheta4moment} in order to bound the $L^2$-norm of the theta kernel. In the end, a simpler alternative was found. Thus, I would like thank Ilya Khayutin and Paul Nelson for letting me include the argument in this manuscript and for all of their comments and inputs on previous iterations. We shall also point out that a similar argument has been carried out in the appendix to \cite{GoldenGates}.

We start with two preparatory Lemmata.



\begin{lem} For any $\epsilon>0$, we may find a constant $C_{\epsilon}>0$ with the following property. For and two relatively prime integers $a,b$ and natural number $D$, we may find an natural number $k \le C_{\epsilon} D^{\epsilon}$ such that $(a+kb,D)=1$.
\label{lem:Jacobsthal}
\end{lem}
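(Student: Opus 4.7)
The plan is to reduce the lemma to the classical bound on Jacobsthal's function. First, split the primes $p\mid D$ into those with $p\mid b$ and those with $p\nmid b$. For a prime $p\mid b$ dividing $D$, the hypothesis $\gcd(a,b)=1$ forces $p\nmid a$, and hence $p\nmid a+kb$ regardless of $k$, so such primes impose no restriction. For the remaining primes (those $p\mid D$ with $p\nmid b$), the condition $p\nmid a+kb$ amounts to $k\not\equiv -ab^{-1}\pmod{p}$, a single forbidden residue class mod $p$.

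Next, I would combine these local conditions via CRT. Let $D'=\prod_{p\mid D,\,p\nmid b}p$ be the squarefree product of the relevant primes and choose $r\in\BZ$ with $r\equiv -ab^{-1}\pmod{p}$ for every such $p$. Then the condition $(a+kb,D)=1$ becomes $\gcd(k-r,D')=1$, which depends only on $k-r\bmod D'$. After the substitution $j=k-r$, what we need is a positive integer in some interval of length $M$ coprime to $D'$.

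The main step is then a standard sieve estimate. Inclusion-exclusion gives
$$
\#\{k\in[1,M]:\gcd(k-r,D')=1\}=\sum_{d\mid D'}\mu(d)\left(\frac{M}{d}+O(1)\right)=M\prod_{p\mid D'}\!\left(1-\tfrac{1}{p}\right)+O\!\left(2^{\omega(D')}\right).
$$
Since $\prod_{p\mid D'}(1-1/p)^{-1}\ll \log\log D$ and $2^{\omega(D')}\le 2^{\omega(D)}\ll_\epsilon D^\epsilon$, the main term dominates as soon as $M\ge C_\epsilon D^\epsilon$ for a suitable $C_\epsilon>0$. Thus a valid $k$ with $1\le k\le C_\epsilon D^\epsilon$ exists, proving the lemma.

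No significant obstacle is anticipated; the statement is a qualitative form of the bound on Jacobsthal's function $g(n)$, and one could alternatively cite Iwaniec's much sharper estimate $g(n)\ll(\log n)^2$, which would trivially suffice.
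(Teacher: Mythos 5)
Your argument is correct, but it is not the route the paper takes: the paper gives no argument at all and simply cites Lemma~2.1 of Saha (\texttt{saha2019sup}), which is itself a Jacobsthal-function statement. Your version is a self-contained replacement: the observation that primes $p\mid(D,b)$ impose no condition (since $(a,b)=1$), the CRT reduction to finding $k$ in an interval with $\gcd(k-r,D')=1$ for the squarefree modulus $D'=\prod_{p\mid D,\,p\nmid b}p$, and the inclusion--exclusion count with main term $M\prod_{p\mid D'}(1-1/p)\gg M/\log\log D$ against an error $O(2^{\omega(D)})\ll_{\epsilon}D^{\epsilon/2}$ are all sound, and the edge cases ($b=0$, or $D\le 2$, where $\log\log D$ degenerates) are trivially absorbed into $C_\epsilon$. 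What the two approaches buy: the paper's citation is shorter and inherits whatever strength the quoted lemma has, while your sieve is elementary, makes the dependence on $\epsilon$ explicit (the true cost is only $2^{\omega(D)}\log\log D$, far below $D^{\epsilon}$), and, as you note, could be upgraded by invoking Iwaniec's bound $g(n)\ll(\log n)^2$ for Jacobsthal's function to get $k\ll(\log D)^2$ --- more than is needed here, since the application in Section~3 only requires the $D^{\epsilon}$ form.
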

\begin{proof} This is \cite[Lemma 2.1]{saha2019sup}.
\end{proof}

\begin{lem} For any $\epsilon>0$, we may find a constant $C_{\epsilon}>0$ with the following property. For any real number $x$, and natural number $D$, we either have that
\begin{enumerate}[(i)]
	\item there is an integer $c$ such that $|x-c| \le \frac{1}{2(1+C_{\epsilon}D^{\epsilon})}$, or that\\
	\item there is a natural number $b \le 2(1+C_{\epsilon} D^{\epsilon})^2$ and an integer $(a,bD)=1$ such that $$|bx-a| \le \tfrac{1}{2}. $$
\end{enumerate}
\label{lem:fracapprox}
\end{lem}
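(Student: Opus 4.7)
Setting $\alpha = 1 + C_\epsilon D^\epsilon$, I plan to combine Dirichlet's approximation theorem with Lemma~\ref{lem:Jacobsthal} via a Stern--Brocot mediant construction to produce the desired approximation.

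First, I would apply Dirichlet's theorem to $x$ with a bound of order $\alpha$, obtaining coprime integers $(a_0, b_0)$ with $b_0 \le \alpha$ and $|b_0 x - a_0| \le 1/\alpha$. If $b_0 = 1$ the approximation $|x-a_0|\le 1/\alpha$ places us in case~(i) after a small adjustment of the Dirichlet bound to tighten the constant; if instead $b_0\ge 2$ and $\gcd(a_0, D) = 1$, case~(ii) follows immediately with $(a, b) = (a_0, b_0)$. The nontrivial case is therefore $b_0 \ge 2$ with $\gcd(a_0, D) > 1$.

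In this case, I would pick a Farey neighbor $(p, q)$ of $a_0/b_0$ satisfying $|a_0 q - b_0 p| = 1$, noting that this Bezout identity forces $\gcd(a_0, p) = 1$. Applying Lemma~\ref{lem:Jacobsthal} to the coprime pair $(a_0, p)$ yields an integer $k$ with $0 \le k \le C_\epsilon D^\epsilon = \alpha - 1$ such that $\gcd(a_0 + kp, D) = 1$. Setting $(a, b) = (a_0 + kp,\, b_0 + kq)$, the identity $aq - bp = a_0 q - b_0 p = \pm 1$ gives $\gcd(a, b) = 1$, and combined with the Jacobsthal step one obtains $\gcd(a, bD) = 1$. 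The denominator bound $b \le \alpha + (\alpha - 1)\alpha = \alpha^2 \le 2\alpha^2$ is satisfied.

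The remaining task is to verify $|bx - a| \le 1/2$. Decomposing $bx - a = (b_0 x - a_0) + k(qx - p)$ and setting $u = |b_0 x - a_0|$, $v = |qx - p|$, the key identity $qu + b_0 v = 1$, which follows from $x$ lying in a Farey interval bounded by $a_0/b_0$ and $p/q$, allows one to trade off the sizes of $u$ and $v$ against $b_0$ and $q$. The main technical obstacle lies precisely here: the value of $k$ produced by Lemma~\ref{lem:Jacobsthal} is not under our control, and a naive choice of the Farey neighbor can yield $|bx - a| > 1/2$ for certain configurations of $b_0$ and $q$. The fix is to consider both Farey neighbors of $a_0/b_0$, so that one may choose the sign pattern in $(b_0 x - a_0) \pm k(qx - p)$ favourably, and if necessary descend to a deeper mediant in the Stern--Brocot tree. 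Ensuring that at least one such choice yields $|bx - a| \le 1/2$ while remaining within the budget $b \le 2\alpha^2$ is the heart of the argument, and accounts for the quadratic factor in the denominator bound of case~(ii).
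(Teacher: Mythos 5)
Your toolkit (Dirichlet, Lemma~\ref{lem:Jacobsthal}, a Bezout/Farey neighbour) is the same as the paper's, but the way you combine them leaves the decisive estimate unproven --- and you say so yourself: the verification of $|bx-a|\le\tfrac12$ is deferred to an unspecified choice of ``both Farey neighbours'' or ``a deeper mediant'', and that is exactly where the argument breaks. Quantitatively, with your $\alpha=1+C_\epsilon D^\epsilon$, $a=a_0+kp$, $b=b_0+kq$, one has $\left|\frac{a}{b}-\frac{a_0}{b_0}\right|=\frac{k}{b_0(b_0+kq)}$, hence $|bx-a|\le\frac{b}{\alpha b_0}+\frac{k}{b_0}$ and also $|bx-a|\ge\frac{k}{b_0}-\frac{b}{\alpha b_0}$. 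Lemma~\ref{lem:Jacobsthal} only guarantees \emph{some} admissible $k\le C_\epsilon D^\epsilon=\alpha-1$, and the least such $k$ may be that large, while $b_0$ may be as small as $2$; e.g.\ $b_0=2$, $q=1$, $k\asymp\alpha$ forces $|bx-a|$ of size $\asymp\alpha$, far beyond $\tfrac12$. Switching to the other neighbour cannot help, since the obstruction is the magnitude of $k(qx-p)$, not its sign, and the Stern--Brocot descent you invoke is precisely the unproven ``heart of the argument''. So, as written, case (ii) is not established in the only nontrivial situation. (There is also the minor point that running Dirichlet with parameter $\alpha$ instead of $2\alpha$ does not give the constant required in case (i) when $b_0=1$.)

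The paper closes this gap by swapping the roles of the two fractions in the shifting step: it keeps the Dirichlet approximant $c/d$ (with $d\le K=2(1+C_\epsilon D^\epsilon)$, $|dx-c|\le 1/K$, and $d\ge2$ in the nontrivial case) fixed, takes the Bezout partner $a/b$ with $ad-bc=\pm1$ and $0<b\le d$, the sign chosen so that $a/b$ and $x$ lie on the same side of $c/d$, and then replaces $(a,b)$ by $(a+kc,\,b+kd)$ with $k$ from Lemma~\ref{lem:Jacobsthal} applied to the coprime pair $(a,c)$. Every member of this family lies at distance exactly $\frac{1}{(b+kd)d}$ from $c/d$, on the same side as $x$, so \emph{independently of $k$} one gets $|x-\frac{a+kc}{b+kd}|\le\max\{\frac{1}{dK},\frac{1}{(b+kd)d}\}$, whence $|(b+kd)x-(a+kc)|\le\max\{\frac{b+kd}{dK},\frac{1}{d}\}\le\frac12$ using $b+kd\le(1+C_\epsilon D^\epsilon)d$ and $d\ge2$; coprimality of $a+kc$ with $b+kd$ is automatic from the determinant identity. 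If you reorganize your construction this way (and run Dirichlet at level $2(1+C_\epsilon D^\epsilon)$ so that $d=1$ lands in case (i) with the stated constant), the proof goes through; as it stands, the uniform bound $|bx-a|\le\tfrac12$ is a genuine gap.
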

\begin{proof} By Dirichlet's Approximation Theorem, we may find $d \in \mathbb{N}$ with $d \le K=2(1+C_{\epsilon}D^{\epsilon})$ and $c \in \mathbb{Z}$ such that $|dx-c|\le \frac{1}{K}$. Without loss of generality, we may assume $(c,d)=1$. Now, if $d=1$, then we are done as the first condition is satisfied. Suppose now, that $d \ge 2$, then we may find a pair of integers $a,b$ such that $ad-bc=\pm 1$, where the sign is chosen such that $\frac{a}{b}$ and $x$ lie on the same side of $\frac{c}{d}$ on the number line. We note that the pair of integers $(a+kc,b+kd)$, where $k$ is any integer, also satisfies the same equation. Hence, we may assume $0< b \le d$. Note that we have $(a,c)=1$ and hence we may apply Lemma \ref{lem:Jacobsthal} to even require $(a,bD)=1$ at the cost of increasing the size of $b$ to at most $(1+C_{\epsilon}D^{\epsilon})d$. We have
$$
|x-\tfrac{a}{b}| \le \max\left\{ \frac{1}{dK}, \frac{1}{bd} \right\} \le \frac{1}{2b}. 
$$
The conclusion follows.
\end{proof}

We note that the standard polygon \eqref{eq:standardpolygon} agrees with the Ford domain
\begin{equation}
\mathcal{F}_{\Gamma,\infty} = \{ z \in \mathbb{H}  \,|\,|\Re(z)|\le \tfrac{1}{2} \text{ and } |cQz+d|\ge 1, \, \forall (cQ,d)=1 \}.
\label{eq:stdforddomain}
\end{equation}
We proceed by showing that the isometric circles $|cQz+d|=1$, corresponding to the motion $\sm a & b \\ cQ & d \esm \in \Gamma$, that form part of the boundary of $\mathcal{F}_{\Gamma,\infty}$ must have their radius $1/|cQ|$ bounded below by $Q^{-1+o(1)}$.

Let $z=x+iy \in \mathcal{F}_{\Gamma,\infty}$. We now apply Lemma \ref{lem:fracapprox} to $xQ$ with $D=Q$. Thus, we have either
\begin{enumerate}[(i)]
	\item $|Qx-c| \le \frac{1}{2(1+C_{\epsilon}Q^{\epsilon})}$ for some integer $c$, or
	\item $|bQx-a| \le \frac{1}{2}$ some natural number $b \le 2(1+C_{\epsilon}Q^{\epsilon})^2$ and integer $a$ with $(a,bQ)=1$.
\end{enumerate}
Let us first deal with the second case, which corresponds to $z$ being away from the cusps other than $\infty$. Since, $z$ is in the Ford domain \eqref{eq:stdforddomain} we must have $|bQz-a|\ge 1$ and thus $y \ge \frac{1}{2} Q^{-1} (1+C_{\epsilon}Q^{\epsilon})^{-2}$. Returning to the first case, if $(c,Q)=1$ then once again we must have $|Qz-c|\ge 1$ and hence $y \ge \frac{1}{2} Q^{-1}$. Finally, if $(c,Q)>1$ then we may find natural numbers $k_{\pm} \le C_{\epsilon}Q^{\epsilon}$ such that $(k_{\pm}c \pm 1,Q)=1$. Thus, we have
$$
\frac{k_{-}c-1}{k_{-}Q} < x < \frac{k_{+}c+1}{k_{+}Q}
$$
and the isometric circles $|(k_{\pm}Q)w+(k_{\pm}c\pm 1)|=1$ include the cusp $\frac{c}{Q}$. Hence, we find that every point $z \in \mathcal{F}_{\Gamma,\infty}$ has either $y \ge \frac{1}{2}(1+C_{\epsilon}Q^{\epsilon})^{-2}Q^{-1}$ or is in a cuspidal region in between to isometric circles of radius at least $C_{\epsilon}Q^{-1-\epsilon}$. In particular, we find that every isometric circle, which is part of the boundary of $\mathcal{F}_{\Gamma,\infty}$ must have radius at least $\frac{1}{2}(1+C_{\epsilon}Q^{\epsilon})^{-2}Q^{-1}$. Thus, the side-pairing motions $\sm a & b \\ cQ & d \esm$ of $\mathcal{F}_{\Gamma,\infty}$, which generate $\Gamma$ must have $|cQ| \ll_{\epsilon} Q^{1+2 \epsilon}$, $|a|,|d| \ll_{\epsilon} Q^{1+2\epsilon}$ and consequently $|b| \ll_{\epsilon} Q^{1+4 \epsilon}$ as $ad-bcQ=1$. We conclude Theorem \ref{thm:gennoncompact}.

\section{Graphs}
\label{sec:graphs}

We consider the Brandt--Ihara--Pizer ``super singular isogeny graphs'', $G(p, \ell)$, where $p, \ell$ are primes with $p \equiv 1 \mod(12)$. They are constructed by interpreting Brandt matrices $B(\ell)$ associated to a maximal order $R$ in the quaternion algebra $B_{p,\infty}$ over $\mathbb{Q}$ ramified at exactly $p,\infty$ as adjacency matrices. They constitute a rich family of non-bipartite $(\ell+1)$-regular Ramanujan graphs on $n:=\frac{p-1}{12}+1$ vertices \cite{PizerRamanujan}. Let $f_j \in L^2(G(p,\ell))$, equipped with probability measure, be an orthonormal eigenbasis of the adjacency matrix $B(\ell)$ with $f_0 \equiv 1$. We may and shall also assume that they are eigenfunctions of all other Brandt matrices $B(m)$ for $(m,p)=1$. We shall denote the eigenvalue of $f_j$ with respect to $B(m)$ by $\lambda_j(m)$. By identifying the vertices of $G(p,\ell)$ with the class set $B_{p,\infty}^{\times}\backslash (B_{p,\infty} \otimes \mathbb{A}_f)^{\times} \slash (R \otimes \widehat{\mathbb{Z}})^{\times}$ of $R$, we may interpret the eigenfunctions $f_j$ as automorphic forms in $L^2(\operatorname{PB}_{p,\infty}^{\times}(\mathbb{Q}) \backslash \operatorname{PB}_{p,\infty}^{\times}(\mathbb{A}) \slash K_{\infty}K_f)$, where $K_{\infty}$ is a maximal torus and $K_f$ the projective image of $(R \otimes \widehat{\mathbb{Z}})^{\times}$, constant on each connected component (as a real manifold). The automorphic forms $f_j$ are in one-to-one correspondence with their theta lift, a modular form of weight $2$, level $p$, and trivial character, which is cuspidal if and only if $j \neq 0$. They form a basis of Hecke eigenforms of said space. The $m$-th Hecke eigenvalue of the theta lift of $f_j$ is given by the eigenvalue $\lambda_j(m)$ for $(m,p)=1$ \cite{Basis-Problem-Eich, Basis-Problem-gen}. Thus, by the Petersson trace formula, one has the density estimate (cf.\@ \cite[Eq.\@ (4)]{SarnakNina})
\begin{equation}
	\sum_{j \neq 0} |\lambda_j(m)|^2 \ll_{\epsilon} (p \ell)^{\epsilon} \left(p+m^{\frac{1}{2}}\right).
	\label{eq:graphdensity}
\end{equation}
Likewise, the fourth moment bound \cite[Theorem 5]{Maasstheta4moment} reads
\begin{equation}
	\sup_{x,y \in G(p,\ell)}\sum_{j \neq 0}\left( f_j(x)^2-f_j(y)^2 \right)^2 \ll_{\epsilon} p^{1+\epsilon}.
	\label{eq:def4moment}
\end{equation}

It is known, that the Ramanujan graphs $G(p,\ell)$ have diameter bounded by $(2+o(1))\log_{\ell}(n)$.\footnote{In fact, sharper results are known, see for example \cite{SarnakNestoridi}.} Here, we shall give an alternative proof which avoids using the Ramanujan bound. Instead, we shall make use of the two inequalities \eqref{eq:graphdensity} and \eqref{eq:def4moment}. For $x,y \in G(p,\ell)$, let $K_t(x,y)$ denote the number of non-backtracking random walks of length $t$ from $x$ to $y$. We have the equality (see \cite{HeckeOpS2II})
\begin{equation}
	\sum_{0 \le i \le \frac{t}{2}} K_{t-2i}(x,y) = \frac{1}{n} \sum_j \lambda_j(\ell^t) f_j(x)f_j(y).
	\label{eq:graph}
\end{equation}
If $x,y$ are of distance larger than $t$, then the left-hand side of \eqref{eq:graph} is zero. We find that
\begin{equation}
	\lambda_0(\ell^t) f_0(x)f_0(y) \ll \sum_{j \neq 0} |\lambda_j(\ell^t)| |f_j(x)f_j(y)|,
	\label{eq:graphineq}
\end{equation}
from which we infer
\begin{equation}
	\ell^t \ll \sup_{x \in G(p,\ell)} \sum_{j \neq 0} |\lambda_j(\ell^t)|f_j(x)^2,
	 \label{eq:graphmain}
\end{equation}
since $\lambda_0(\ell^t) = \frac{\ell^{t+1}-1}{\ell -1}$. By orthonormality, we have $\sum_{x \in G(p,\ell)} f_j(x)^2 = n$. Hence, we may bound the right-hand side further
\begin{equation}
	\sup_{x \in G(p,\ell)} \sum_{j \neq 0} |\lambda_j(\ell^t)|f_j(x)^2 \le \sum_{j \neq 0} |\lambda_j(\ell^t)|+ \sup_{x,y \in G(p,\ell)} \sum_{j \neq 0} |\lambda_j(\ell^t)|\left( f_j(x)^2-f_j(y)^2 \right).
	\label{eq:graphdiff}
\end{equation}
By applying Cauchy--Schwarz and making use of \eqref{eq:graph} and \eqref{eq:def4moment}, we conclude $l^{t} \ll_{\epsilon} n^{2+\epsilon}$ or $t \le (2+o(1))\log_{\ell}(n)$. In particular, the diameter is bounded by the same quantity.

\bibliography{RafBib}
\end{document}